\theoremstyle{definition}
\theoremstyle{plain}
\newtheorem{prop}{\textbf{Proposition}}[section]
\newtheorem{lemma}{\textbf{Lemma}}[section]
\newtheorem{thm}{\textbf{Theorem}}[section]
\theoremstyle{remark}
\newtheorem{rmk}{\textbf{Remark}}[section]
\theoremstyle{plain}
\newtheorem{conjecture}{Conjecture}
\newcommand{\df}{\overset{\text{def}}{=}}
\title{A note on homogeneous ideals of polarized abelian surfaces}
\author{Daniele Agostini}
\address{
  Humboldt-Universit\"{a}t zu Berlin\\
  Institut  f\"{u}r Mathematik\\
  Unter den Linden 6, 10099, Berlin Germany}
\email[]{daniele.agostini@math.hu-berlin.de}
\date{}
\begin{document}
	\maketitle
	
	\begin{abstract}
		Gross and Popescu conjectured that the homogeneous ideal of an embedded $(1,d)$-polarized abelian surface is generated by  quadrics and cubics for $d\geq 9$. We prove a generalization of this using the projective normality of the embedding. It follows that the homogeneous ideal of an abelian surface embedded by a complete linear system is generated by quadrics and cubics, with three exceptions.
	\end{abstract}
	
	\section{Introduction}
	
	We will always work over $\mathbb{C}$, but the same arguments work over an algebraically closed field of characteristic $0$.  
	
	In their paper {\cite{gross_popescu}} Gross and  Popescu proved that if $(A,L)$ is a general polarized abelian surface of type $(1,d)$ with $d\geq 10$, then its homogeneous ideal in the embedding $A\subseteq \mathbb{P}(H^0(A,L))$ is generated by quadrics. At the end of the same paper, they formulated the following conjecture:
	
	\begin{conjecture}{\cite[Conjecture (a)]{gross_popescu}}
		Let $(A,L)$ be a polarized abelian surface of type $(1,d)$ such that $L$ is very ample and $d\geq 9$. Then the homogeneous ideal of $A$ in the embedding $A\subseteq \mathbb{P}(H^0(A,L))$ is generated by  quadrics and cubics. 
	\end{conjecture}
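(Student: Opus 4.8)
The plan is to translate the statement into the vanishing of Koszul cohomology groups and then to compute those groups by restricting to a curve section. Write $V=H^0(A,L)$, let $R_A=\bigoplus_{m\ge0}H^0(A,L^m)$ be the section ring over $S=\operatorname{Sym}V$, and let $K_{p,q}(A,L)$ denote Koszul cohomology. Generation of the homogeneous ideal $I_A$ in degrees $\le 3$ is equivalent to $\beta_{1,j}(R_A)=0$ for $j\ge 4$, i.e.\ to $K_{1,p}(A,L)=0$ for all $p\ge 3$. Projective normality enters immediately: it says $\operatorname{Sym}^qV\to H^0(A,L^q)$ is surjective for every $q$, i.e.\ $K_{0,q}(A,L)=0$ for $q\ge1$, and it makes $R_A$ a cyclic $S$-module. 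A first reduction via Castelnuovo--Mumford regularity isolates the difficulty: projective normality together with Kodaira vanishing and Serre duality on the surface ($H^1(A,L^m)=H^2(A,L^{-m})^\vee=0$ for $m\ge1$) shows that $\mathcal I_A$ is $4$-regular, so $I_A$ is generated in degrees $\le 4$ and $K_{1,p}(A,L)=0$ for $p\ge4$. It fails to be $3$-regular by exactly one step, since $H^3(\mathcal I_A)=H^2(A,\mathcal O_A)=\mathbb C\ne0$, and this irregularity term is precisely what could create a quartic generator. So everything comes down to the single group $K_{1,3}(A,L)$.

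To compute it I would choose a smooth irreducible curve $C\in|L|$, which exists by Bertini because $L$ is base-point free. By adjunction $\omega_C=(\omega_A\otimes L)|_C=L|_C=:L_C$, so $C$ is a canonical curve; since $L$ is very ample its restriction $L_C=\omega_C$ is very ample, so $C$ is non-hyperelliptic. Let $s\in V$ cut out $C$. As $A$ is integral, $s$ is a non-zero-divisor and we have a short exact sequence of graded $S$-modules $0\to R_A(-1)\xrightarrow{\,\cdot s\,}R_A\to Q\to0$ with $Q=R_A/sR_A$. Because multiplication by a linear form acts as zero on $\operatorname{Tor}^S_\bullet(-,\mathbb C)$, the long exact sequence splits into $0\to K_{p,q}(A,L)\to K_{p,q}(Q)\to K_{p-1,q+1}(A,L)\to 0$, and combined with $K_{0,q}(A,L)=0$ for $q\ge1$ this yields $K_{1,p}(A,L)\cong K_{1,p}(Q)$ for all $p\ge1$.

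Next I would compare $Q$ with the canonical ring $R_C=\bigoplus_m H^0(C,\omega_C^{\,m})$. Restriction gives an inclusion $Q\hookrightarrow R_C$, and applying the snake lemma to $0\to L^{m-1}\xrightarrow{\,s\,}L^m\to L_C^m\to0$ shows, using $H^1(A,L^{m-1})=0$ for $m\ge2$, that the cokernel $T$ is concentrated in degree $1$ with $T_1=H^1(A,\mathcal O_A)\cong\mathbb C^{2}$. Thus $T\cong\mathbb C^2(-1)$ is a trivial $S$-module, so $K_{p,q}(T)=0$ unless $q=1$. Feeding $0\to Q\to R_C\to T\to0$ into the Koszul long exact sequence gives, for every $p\ge3$, an isomorphism $K_{1,p}(Q)\cong K_{1,p}(C,\omega_C)$, since the neighbouring correction terms $K_{2,p-1}(T)$ and $K_{1,p}(T)$ both vanish once $p\ge3$. (For $p=2$ these terms survive, which is exactly why cubic generators are genuinely needed and are not killed.) Hence $K_{1,p}(A,L)\cong K_{1,p}(C,\omega_C)$ for all $p\ge3$, and by Petri's theorem the homogeneous ideal of a non-hyperelliptic canonical curve is generated by quadrics and cubics, so $K_{1,p}(C,\omega_C)=0$ for $p\ge3$. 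Therefore $K_{1,p}(A,L)=0$ for all $p\ge3$ and $I_A$ is generated by quadrics and cubics.

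The step I expect to be the main obstacle is the second comparison: one must verify carefully that the irregularity of $A$, encoded in the degree-$1$ module $T\cong H^1(A,\mathcal O_A)(-1)$, perturbs only the quadratic and cubic strands of the resolution and leaves the higher generators identical to those of the canonical curve; tracking exactly which $K_{p,q}(T)$ occur, and in which internal degrees, is where the homological bookkeeping must be done with care. Very ampleness is used only to guarantee a non-hyperelliptic section $C$, and projective normality only to force $K_{0,q}(A,L)=0$; this is the sense in which the conclusion should generalize to every projectively normal very ample polarization, the exceptional polarizations being precisely those for which one of these two inputs fails.
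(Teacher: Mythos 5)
Your first two reductions match the paper's starting point and are sound: projective normality reduces the statement to $K_{1,q}(A,L)=0$ for $q\geq 3$, and your regularity computation ($\mathscr{I}_A$ is $4$-regular because $H^1(\mathscr{I}_A(3))=0$ by projective normality, $H^2(\mathscr{I}_A(2))\cong H^1(A,L^2)=0$ and $H^3(\mathscr{I}_A(1))\cong H^2(A,L)=0$) correctly disposes of $q\geq 4$. The genuine gap is in the last step, the computation of $K_{1,3}$. The exact sequence $0\to Q\to R_C\to T\to 0$ lives over $S=\mathrm{Sym}\,H^0(A,L)$, so the group appearing in your long exact sequence is $\mathrm{Tor}^S_1(R_C,\mathbb{C})_{1+p}$, i.e.\ the Koszul cohomology of the canonical ring of $C$ computed \emph{with respect to the image $W$ of $H^0(A,L)$ in $H^0(C,\omega_C)$}, which has codimension $2$ (its cokernel is exactly your $T_1=H^1(A,\mathcal{O}_A)$). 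This is not $K_{1,p}(C,\omega_C)$, which is computed over $\mathrm{Sym}\,H^0(C,\omega_C)$: the two theories already disagree in homological degree zero, since $K_{0,1}(C;W,\omega_C)=H^0(\omega_C)/W=\mathbb{C}^2$ while $K_{0,1}(C,\omega_C)=0$. Geometrically you are dealing with the projection of the canonical curve from a line, and Petri's theorem says nothing about the syzygies of that projection; vanishing of Koszul groups for the full linear system does not formally imply vanishing for a proper subspace (the comparison goes through a spectral sequence whose individual terms need not vanish — this is exactly the delicate ``projection of syzygies'' problem studied by Green and Aprodu). So the obstacle you flagged (the $T$-correction terms) is in fact handled correctly, while the change of ambient polynomial ring, which you pass over silently, is where the argument breaks. (A harmless slip besides: the hyperplane-section sequence reads $0\to K_{p,q}(A,L)\to K_{p,q}(Q)\to K_{p-1,q}(A,L)\to 0$, not $K_{p-1,q+1}$.)

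The paper avoids all of this with Green's duality theorem: since $\omega_A\cong\mathcal{O}_A$ and $H^1(A,L^m)=0$ for $m\geq 1$, one has $K_{1,q}(A,L)^{\vee}\cong K_{r-3,\,3-q}(A,L)$ with $r=h^0(L)-1$, and for $q\geq 3$ the right-hand side vanishes for trivial reasons (the second index is negative, or zero with positive homological index $r-3>0$). Thus once projective normality is known, all the groups $K_{1,q}$ with $q\geq 3$ — including the $q=3$ case you isolate — vanish in one line, with no regularity bound and no curve section. If you want to rescue your route, you would essentially need a duality or projection argument to control the restricted system $W\subset H^0(\omega_C)$, at which point you are reproving the surface statement the hard way.
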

	
	This result was already proven for  $d=7$  by Manolache and Schreyer in {\cite[Corollary 2.2]{manolache_schreyer}}, where they show that the ideal is generated by cubics and compute the whole minimal free resolution.
	The case $d=8$ was proven by Gross and Popescu {\cite[Theorem 6.13]{gross_popescu_cyi}} for a general abelian surface and
	the cases $d\geq 23$ were recently proved by K\"uronya and Lozovanu {\cite[Theorem 1.3]{kuronya_lozovanu}} as a consequence of their Reider-type result for higher syzygies on such surfaces. 
	
	The purpose of this note is to give a simple unified proof of the following:
	
	\begin{prop}\label{proposition}
		Let $(A,L)$ be a polarized abelian surface of type $(1,d)$ such that $L$ is very ample and $d\geq 7$. Then the homogeneous ideal of $A$ in the embedding $A\subseteq \mathbb{P}(H^0(L))$ is generated by  quadrics and cubics. 
	\end{prop}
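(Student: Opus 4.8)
The plan is to combine projective normality of the embedding with a Koszul-cohomology vanishing that I would extract from the syzygy bundle. Write $V=H^0(L)$, so that $\dim V=\chi(L)=d$ and $A\subseteq\mathbb P(V)=\mathbb P^{d-1}$. The first step is to invoke projective normality of $(A,L)$; this is where very ampleness and the bound $d\ge 7$ enter, via the known normal-generation results for polarized abelian surfaces. Granting it, the homogeneous coordinate ring of $A$ agrees with the section ring $\bigoplus_{q\ge 0}H^0(L^q)$, and the minimal generators of the homogeneous ideal $I_A$ of degree $q+1$ are counted by the Koszul cohomology group $K_{1,q}(A,L)$. Hence $I_A$ is generated by quadrics and cubics precisely when $K_{1,q}(A,L)=0$ for all $q\ge 3$, and this vanishing is the target.

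To reach it I would pass to the kernel (syzygy) bundle $M_L$, defined by the exact sequence $0\to M_L\to V\otimes\mathcal O_A\to L\to 0$; very ampleness makes $M_L$ a vector bundle of rank $d-1$, and taking determinants gives $\det M_L\cong L^{-1}$. Since an ample line bundle on an abelian surface has vanishing higher cohomology, $H^1(A,L^{q-1})=0$ for $q\ge 2$, and a standard diagram chase then identifies $K_{1,q}(A,L)\cong H^1(A,\wedge^2 M_L\otimes L^{q-1})$. Now I would apply Serre duality: because $K_A\cong\mathcal O_A$ and $\wedge^2 M_L^{\vee}\cong\wedge^{d-3}M_L\otimes L$, one gets $K_{1,q}(A,L)\cong H^1(A,\wedge^{d-3}M_L\otimes L^{2-q})^{*}$. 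For $q\ge 3$ the twist $L^{2-q}$ is anti-ample, so feeding the defining sequence of $\wedge^{d-3}M_L$ through cohomology and using $H^0(L^{2-q})=H^1(L^{2-q})=0$ collapses this to $K_{1,q}(A,L)\cong H^0(A,\wedge^{d-4}M_L\otimes L^{3-q})^{*}$.

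It then remains to prove $H^0(A,\wedge^{d-4}M_L\otimes L^{3-q})=0$ for every $q\ge 3$. For $q\ge 4$ this is immediate, since $\wedge^{d-4}M_L$ is a subbundle of the trivial bundle $\wedge^{d-4}V\otimes\mathcal O_A$ and the twist $L^{3-q}$ is anti-ample, so there are no sections. The borderline case $q=3$, which is what actually forces cubics rather than higher-degree forms, requires the vanishing $H^0(A,\wedge^{d-4}M_L)=0$, and here I would argue pointwise. The fibre of $M_L$ at $x$ is the hyperplane $\{v\in V:v(x)=0\}$, so a global section of $\wedge^{d-4}M_L$ is a constant tensor $\omega\in\wedge^{d-4}V$ satisfying $\iota_{\mathrm{ev}_x}(\omega)=0$ for all $x\in A$, where $\mathrm{ev}_x\in V^{*}$ is evaluation at $x$. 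Because the embedding is nondegenerate the functionals $\mathrm{ev}_x$ span $V^{*}$, hence $\iota_\xi(\omega)=0$ for every $\xi\in V^{*}$, and this forces $\omega=0$ as soon as $d-4\ge 1$. This gives $K_{1,q}(A,L)=0$ for all $q\ge 3$ whenever $d\ge 5$.

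The step I expect to be the real obstacle is not any of the cohomological manipulations — these are formal and in fact go through for $d\ge 5$ — but rather the projective normality input itself, which is the only place the sharper bound $d\ge 7$ is needed and which fails for the smallest polarizations (such as the $(1,5)$ Horrocks--Mumford surfaces). Inside the self-contained part, the one genuinely delicate point is the case $q=3$: unlike $q\ge 4$ it is not a trivial degree count, and the contraction argument (equivalently $H^0(\wedge^{d-4}M_L)=0$) is exactly what closes the gap. Finally, since the argument uses only projective normality together with this $M_L$-computation, it should apply verbatim to an arbitrary abelian surface embedded by a complete linear system, the exceptional cases being precisely those few polarizations that are not normally generated.
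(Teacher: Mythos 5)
Your proof is correct, and it reaches the conclusion by the same overall strategy as the paper --- quote projective normality for $d\geq 7$ (Lazarsfeld, Fuentes Garc\'{\i}a), reduce the statement to $K_{1,q}(A,L)=0$ for $q\geq 3$, and then kill these groups by a duality using $\omega_A\cong\mathcal{O}_A$ --- but you execute the duality step differently. The paper simply invokes Green's duality theorem for Koszul cohomology, which (after checking the vanishing hypotheses $H^1(L^{q-1})=H^1(L^{q-2})=0$) gives $K_{1,q}(A,L)^{\vee}\cong K_{r-3,3-q}(A,L)$ with $r=h^0(L)-1$, and the right-hand side vanishes for formal degree reasons. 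You instead prove the needed instance of this duality by hand: the identification $K_{1,q}\cong H^1(\wedge^2M_L\otimes L^{q-1})$ (valid since $H^1(L^{q-1})=0$), Serre duality plus $(\wedge^2M_L)^{\vee}\cong\wedge^{d-3}M_L\otimes L$, the reduction to $H^0(\wedge^{d-4}M_L\otimes L^{3-q})$, and finally the contraction argument showing $H^0(\wedge^{d-4}M_L)=0$ in the borderline case $q=3$ are all correct. What your version buys is self-containedness and transparency: it isolates exactly where the bound $d\geq 5$ enters the formal part (namely $d-4\geq 1$ in the $q=3$ case), making it clear, as the paper also emphasizes, that the only genuine input requiring $d\geq 7$ is projective normality itself; the cost is that you essentially reprove a special case of a general theorem of Green that the paper is content to cite. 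Your closing observation that the argument applies to any projectively normal complete embedding of an abelian surface is precisely the paper's Lemma on projective normality implying generation by quadrics and cubics, which underlies its Theorem on the general classification.
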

	
	The proof we give is based on a simple application of Koszul cohomology, together with the following result of  Lazarsfeld  and Fuentes Garc\'ia:
	
	\begin{thm}[Lazarsfeld {\cite{lazarsfeld}}, Fuentes Garc\'ia {\cite{fuentesgarcia}}]\label{projectivenormality}
		Let $(A,L)$ be a polarized abelian surface of type $(1,d)$ such that $L$ is very ample and $d\geq 7$. Then the embedding $A\subseteq \mathbb{P}(H^0(A,L))$ is projectively normal.
	\end{thm}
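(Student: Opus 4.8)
The plan is to reformulate projective normality as a cohomological vanishing, reduce to two low-degree multiplication maps, and attack those by restricting to a canonical curve on the surface. Projective normality of $A\subseteq\mathbb P(H^0(A,L))$ is equivalent to the surjectivity of every multiplication map $\mu_k\colon H^0(A,L)\otimes H^0(A,L^k)\to H^0(A,L^{k+1})$, $k\ge 1$. Tensoring the tautological exact sequence $0\to M_L\to H^0(A,L)\otimes\mathcal O_A\to L\to 0$ by $L^k$ and using that an ample line bundle on an abelian surface has $H^i(A,L^k)=0$ for all $i>0$ and $k\ge 1$, the cokernel of $\mu_k$ is identified with $H^1(A,M_L\otimes L^k)$. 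Thus it suffices to show
\[
H^1(A,M_L\otimes L^k)=0\qquad\text{for every }k\ge 1 .
\]

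First I would dispose of the range $k\ge 3$ by Castelnuovo--Mumford regularity. Because $H^1(A,L^2)=H^2(A,L)=0$, the line bundle $L^3$ is $0$-regular with respect to the globally generated bundle $L$, and Mumford's lemma then makes $H^0(A,L^k)\otimes H^0(A,L)\to H^0(A,L^{k+1})$ surjective for all $k\ge 3$. The cases $k=1$ and $k=2$ cannot be reached this way: the obstruction is exactly the irregularity of the surface, $H^1(A,\mathcal O_A)=\mathbb C^2$ and $H^2(A,\mathcal O_A)=\mathbb C$, which prevents $L$ and $L^2$ from being $0$-regular with respect to $L$. Proving quadratic normality ($k=1$) and cubic normality ($k=2$) for the primitive polarization is therefore the heart of the argument, and this is where the hypotheses $d\ge 7$ and very ampleness must be used.

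For these two cases I would pass to a smooth curve $C\in|L|$, which exists by Bertini since $L$ is very ample. Adjunction on the abelian surface gives $\omega_C=(K_A+C)|_C=L|_C$, so $C$ is canonically embedded of genus $g=d+1$, and it is automatically non-hyperelliptic because $L|_C=\omega_C$ is very ample. By Noether's theorem the canonical curve is projectively normal. The restriction sequence $0\to L^{k}\to L^{k+1}\to\omega_C^{k+1}\to 0$ and the vanishing $H^1(A,L^k)=0$ give a short exact sequence $0\to H^0(A,L^k)\to H^0(A,L^{k+1})\to H^0(C,\omega_C^{k+1})\to 0$ whose kernel is $s_C\cdot H^0(A,L^k)$, where $s_C\in H^0(A,L)$ cuts out $C$; in particular the kernel already lies in $\operatorname{im}\mu_k$. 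A diagram chase therefore reduces the surjectivity of $\mu_1$ and $\mu_2$ on $A$ to the surjectivity of the two restricted multiplication maps $W\otimes W\to H^0(C,\omega_C^2)$ and $W\otimes H^0(C,\omega_C^2)\to H^0(C,\omega_C^3)$, where $W:=\operatorname{im}\bigl(H^0(A,L)\to H^0(C,\omega_C)\bigr)$ has codimension $q(A)=2$; here one uses that $H^0(A,L^2)\to H^0(C,\omega_C^2)$ is already onto.

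The main obstacle is precisely this codimension-two deficiency. On the full canonical system Noether's theorem gives the analogous surjectivities, but $W$ omits the two ``irregular'' directions spanning $H^0(C,\omega_C)/W\cong H^1(A,\mathcal O_A)$, and discarding two sections can a priori destroy surjectivity of a multiplication map. I expect the required surjectivities to follow from the positivity of $C$ inside $A$: a smooth curve in $|L|$ with $d\ge 7$ should have Clifford index and gonality large enough that the relevant Koszul cohomology of $\omega_C$ vanishes, so that the two missing sections do not obstruct the products. Controlling this deficiency---through the Green--Lazarsfeld vanishing theorem or the base-point-free pencil trick on $C$---is the one genuinely hard step. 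Once it is in place, the diagram chases settle $k=1,2$, and together with the regularity argument for $k\ge 3$ this yields $H^1(A,M_L\otimes L^k)=0$ for all $k\ge 1$, hence projective normality.
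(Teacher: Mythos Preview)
Your reductions are correct: projective normality is equivalent to the vanishing of $H^1(A,M_L\otimes L^k)$ for $k\ge 1$; Castelnuovo--Mumford regularity handles $k\ge 3$; and restriction to a smooth $C\in|L|$ reduces the remaining cases to the surjectivity of $W\otimes W\to H^0(C,\omega_C^2)$ and $W\otimes H^0(C,\omega_C^2)\to H^0(C,\omega_C^3)$, where $W\subset H^0(C,\omega_C)$ has codimension~$2$. The gap is exactly where you locate it, and it is a genuine one: you have not proved these two surjectivities, and the tools you name are unlikely to close the gap uniformly. Green--Lazarsfeld and Clifford-index vanishing theorems control syzygies of the \emph{full} canonical system, not of a specific codimension-two subspace, and $W$ here is determined by the inclusion $C\hookrightarrow A$, not chosen generically. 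Moreover, the curve $C$ need not have large Clifford index: if $A$ carries an elliptic fibration whose fibre $F$ satisfies $(F\cdot L)=3$ (which very ampleness does not exclude), then $C$ admits a degree-$3$ cover of an elliptic curve, and the numerical hypotheses behind the vanishing theorems you invoke become delicate. For $d=7$ the map $\operatorname{Sym}^2 W\to H^0(\omega_C^2)$ is between $21$-dimensional spaces, so you need an isomorphism, and nothing in your argument excludes a quadric through the projected curve.

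The paper's proof is completely different and never passes to a curve. It first reduces, via Green's duality theorem for Koszul cohomology on the surface, to the single surjectivity of $\operatorname{Sym}^2 H^0(A,L)\to H^0(A,L^2)$ (so your separate treatment of $k=2$ is unnecessary). Writing $U$ for the cokernel, it proves $\dim U\le 6$ by bounding the space of quadrics through a nondegenerate surface of degree $2d$ in $\mathbb P^{d-1}$ via a Castelnuovo-type count on a general codimension-two linear section. The decisive observation is that $U$ is a weight-$2$ representation of the Heisenberg group $G(L)$; by Iyer's classification, any nonzero such representation has dimension at least $d$ (for $d$ odd) or $d/2$ (for $d$ even), so $U\ne 0$ would force $\dim U\ge 7$ whenever $d\ge 7$ is odd or $d\ge 14$ is even. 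The residual cases $d=8,10,12$ are treated separately by Fuentes Garc\'ia using the involutions coming from the $2$-torsion of $K(L)$. It is this Heisenberg symmetry, invisible on the hyperplane curve, that replaces the missing step in your outline.
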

	
	Proposition \ref{proposition} and previous results give the following:
	
	\begin{thm}\label{theoremideal}
		Let $(A,L)$ be a polarized abelian surfaces with $L$ is very ample and  not of type $(1,5),(1,6),(2,4)$. Then the embedding $A\subseteq \mathbb{P}(H^0(A,L))$ is projectively normal and the homogeneous ideal of $A$ is generated by quadrics and cubics.
	\end{thm}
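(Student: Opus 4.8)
The plan is to reduce an arbitrary very ample polarization to one of three regimes and treat each separately: the primitive type $(1,d)$ with $d\ge 7$, already covered by Proposition~\ref{proposition} and Theorem~\ref{projectivenormality}; the higher powers $L=H^{\otimes n}$ with $n\ge 3$; and the square $L=H^{\otimes 2}$. The backbone is the observation that every polarization of type $(d_1,d_2)$ is a power of a primitive one. Indeed, if $E=c_1(L)$ denotes the associated alternating form, then $E'=\tfrac{1}{d_1}E$ has integral elementary divisors $(1,d_2/d_1)$, is of Hodge type $(1,1)$ (being a real multiple of $E$), and is positive; hence $E'\in\mathrm{NS}(A)$ and is represented by a polarization $H_0$ of type $(1,d_2/d_1)$. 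Since $\mathrm{Pic}^0(A)$ is divisible, one may correct $H_0$ by a numerically trivial line bundle to obtain $H$ with $L=H^{\otimes d_1}$. I would begin by recording that a very ample polarization on an abelian surface has type $(1,d)$ with $d\ge 5$, $(2,2k)$ with $k\ge 2$, or $(d_1,d_2)$ with $d_1\ge 3$, so that the case analysis below is exhaustive.

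The case $d_1=1$ requires nothing new: for $d\ge 7$, Theorem~\ref{projectivenormality} gives projective normality and Proposition~\ref{proposition} gives generation by quadrics and cubics, while the two remaining primitive types $(1,5)$ and $(1,6)$ lie below the threshold $d\ge 7$ of both inputs and are two of the three excluded cases.

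For $d_1\ge 3$ I would quote the classical theory of powers of ample line bundles. Writing $L=H^{\otimes d_1}$ with $d_1\ge 3$, Koizumi's theorem gives projective normality, and the work of Mumford and Kempf on the equations of $nH$ for $n\ge 3$ gives that the homogeneous ideal is generated by quadrics and cubics (in fact by quadrics once $d_1\ge 4$). No such type is excluded, in agreement with the statement.

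The main obstacle is the square case $d_1=2$, where $L=H^{\otimes 2}$ with $H$ primitive of type $(1,m)$, $m=d_2/2$, and Koizumi's theorem is unavailable because it requires the third power. Here I would use that for $m\ge 3$ the primitive polarization $H$ is base-point free---the very ampleness of $L$ ruling out the elliptic curves that could obstruct this---so that Ohbuchi's theorem yields projective normality of $L=2H$; I would then run the Koszul-cohomology computation of Proposition~\ref{proposition}, in which projective normality together with the vanishing $H^i(A,L^{\otimes j})=0$ for $i>0$ and $j\ge 1$ reduces generation in degrees $\le 3$ to the vanishing of the Koszul groups $K_{1,q}(A,L)$ for $q\ge 3$, that is, to the absence of minimal generators of the ideal in degree $\ge 4$. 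The delicate point is the threshold on $m$: this vanishing, like the base-point freeness of $H$, is available precisely for $m\ge 3$, i.e.\ $d_2\ge 6$, whereas the boundary value $m=2$---the type $(2,4)$---is where the argument does not reach, yielding the third excluded case.
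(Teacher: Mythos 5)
Your proposal is correct and follows essentially the same route as the paper: decompose by the type $(d_1,d_2)$ (writing $L$ as a $d_1$-th power of a primitive polarization), establish projective normality in each regime via Koizumi for $d_1\ge 3$, Ohbuchi for $d_1=2$, $d_2\ge 6$, and Lazarsfeld--Fuentes Garc\'ia for $d_1=1$, $d\ge 7$, and check that the excluded types are exactly those where projective normality fails or very ampleness forces them out --- this is precisely the content of Theorem~\ref{classificationprojectivenormality}. The one organizational difference is in how you pass from projective normality to generation by quadrics and cubics. The paper does this once and for all with Lemma~\ref{stupidlemma}: for \emph{any} projectively normal embedding of an abelian surface by a complete linear system, Green's duality (Theorem~\ref{greendualitytheorem}, with $\omega_A$ trivial) gives $K_{1,q}(A,L)^{\vee}\cong K_{r-3,3-q}(A,L)=0$ for all $q\ge 3$, with no hypothesis on the type. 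So your appeal to Mumford--Kempf for $d_1\ge 3$ is sound but unnecessary, and your closing remark that the vanishing of $K_{1,q}(A,L)$ for $q\ge 3$ ``is available precisely for $m\ge 3$'' misplaces where the threshold enters: the Koszul duality step costs nothing and applies uniformly; the only thing that fails at $(2,4)$ (and at $(1,5)$, $(1,6)$) is projective normality itself. With that understood, your case analysis is exhaustive and each step is justified, so the argument goes through.
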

	
	\textbf{Acknowledgments}: I am very grateful to Robert Lazarsfeld for having made available to me the paper {\cite{lazarsfeld}} and for allowing me to reproduce a part of it here. I would like to thank Ciro Ciliberto and Edoardo Sernesi for helpful discussions. I am happy to thank my advisor Gavril Farkas for his suggestion to study this question and for his advice. I would also like to thank the referee for the helpful comments.

	\section{Koszul cohomology }
	
	Koszul cohomology is a language introduced by Green {\cite{green}} to study minimal free resolutions. We recall here its basic definitions and properties. Let $X$ be an integral projective variety of positive dimension and $L$ a very ample line bundle on $X$. For every vector bundle $E$ on $X$ we can form the group
	\[ \Gamma(X,E,L) = \bigoplus_{q\in \mathbb{Z}} H^0(X,L^{q} \otimes E) \]
	which has a natural structure of a finitely generated $S=\text{Sym}^{\bullet}(H^0(X,L))$-module. In particular we can take its \textit{minimal free resolution}:
	\begin{equation*}
	0 \longleftarrow \Gamma(X,E,L) \longleftarrow F_0 \longleftarrow F_1 \longleftarrow \dots \longleftarrow F_p \longleftarrow \dots F_n \longleftarrow 0
	\end{equation*} 
	where every $F_p$ is a graded free $S$-module:
	\begin{equation*}
	F_p = \bigoplus_{q\in \mathbb{Z}} K_{p,q}(X,E,L) \otimes_{\mathbb{C}} S(-p-q).
	\end{equation*}
	The vector spaces $K_{p,q}(X,E,L)$ are called the \textit{Koszul cohomology groups} of $(X,E,L)$ and they can also be computed as the middle cohomology of the \textit{Koszul complex} (see {\cite[Theorem 1.b.4]{green}}):
	
	\begin{equation*}\label{koszulcomplex} \wedge^{p+1}H^0(X,L) \otimes H^0(X,E \otimes L^{q-1}) \longrightarrow \wedge^p H^0(X,L) \otimes H^0(X,E \otimes L^q) \longrightarrow \wedge^{p-1}H^0(X,L)  \otimes H^0(X,E \otimes L^{q+1}) 
	\end{equation*}
	
	If $E=\mathcal{O}_X$ we denote $K_{p,q}(X,L) : \df K_{p,q}(X,E,L)$.
	
	\begin{rmk}\label{remark}
		From the Koszul complex we see immediately that $K_{p,q}(X,L)=0$ for $q<0$ or $q=0, p>0$. Moreover we also see that the embedding $X\hookrightarrow \mathbb{P}(H^0(X,L))$ is projectively normal if and only if $K_{0,q}(X,L)=0$ for all $q\geq 2$. In this case, the minimal free resolution of the ideal $I_X$ is given by
		\begin{equation*}
		0 \longleftarrow I_X \longleftarrow F_1 \longleftarrow \dots \longleftarrow F_p \longleftarrow \dots F_n \longleftarrow 0
		\end{equation*} 
		so that $I_X$ is generated by quadrics and cubics if and only if the only factors appearing in $F_1$ are $S(-2)$ and $S(-3)$ which means that $K_{1,q}(X,L)=0$ for all $q\geq 3$. 
	\end{rmk}
	
	We will need a duality theorem for Koszul cohomology:
	
	\begin{thm}{\cite[Theorem 2.c.6]{green}}\label{greendualitytheorem}
		Let $X$ be a smooth integral projective variety, $L$ a very ample line bundle on $X$ and $E$ a vector bundle on $X$.  Set $r=h^0(X,L)-1$ and suppose that
		\begin{equation}
		H^i(X,E \otimes L^{q-i}) = H^i(X,E \otimes L^{q-1-i})=0 \qquad \text{ for all } i=1,\dots,\dim X-1
		\end{equation}
		Then there is an isomorphism
		\begin{equation}
		K_{p,q}(X,E,L)^{\vee} \cong K_{r-{\dim X}-p,\dim X+1-q}(X,\omega_X \otimes E^{\vee},L).
		\end{equation}
	\end{thm}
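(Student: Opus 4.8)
The plan is to obtain the duality from a single spectral sequence attached to the sheafified Koszul complex, using Serre duality on $X$ to interchange $H^0$ and top cohomology and the self-pairing of the exterior algebra of $W\df H^0(X,L)$ to reflect exterior powers. Throughout write $n=\dim X$, so that $\dim W=r+1$, and fix the total degree $m=p+q$, which is preserved by the Koszul differential.

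First I would build the relevant acyclic complex of sheaves. Since $L$ is very ample, the evaluation $W\otimes\mathcal O_X\to L$ is surjective; equivalently it is the Koszul complex of a nowhere-vanishing section (the base locus of $|L|$ is empty), hence a resolution of the zero sheaf. Twisting this exact complex by $E\otimes L^{m}$ produces an exact complex of vector bundles
\[
\mathcal C^\bullet:\quad \cdots\to \wedge^{p+1}W\otimes E\otimes L^{m-p-1}\to \wedge^{p}W\otimes E\otimes L^{m-p}\to \wedge^{p-1}W\otimes E\otimes L^{m-p+1}\to\cdots
\]
carrying the Koszul differential. I would then run its hypercohomology spectral sequence, with $E_1$-page $E_1^{p,j}=\wedge^{p}W\otimes H^j(X,E\otimes L^{m-p})$; since $\mathcal C^\bullet$ is acyclic, it abuts to $0$. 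By construction the bottom row $j=0$ is exactly the three-term complex computing Koszul cohomology, so $E_2^{p,0}\cong K_{p,q}(X,E,L)$.

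Next I would identify the top row $j=n$. Serre duality gives $H^n(X,E\otimes L^{m-p})\cong H^0(X,\omega_X\otimes E^\vee\otimes L^{p-m})^\vee$, while the exterior pairing $\wedge^{p}W\cong(\wedge^{r+1-p}W)^\vee\otimes\wedge^{r+1}W$ turns $\wedge^pW$ into a reflected exterior power. Combining the two, the row $j=n$ becomes the $\mathbb C$-linear dual, twisted by the fixed line $\wedge^{r+1}W$, of the complex computing $K_{\bullet,\bullet}(X,\omega_X\otimes E^\vee,L)$ in total degree $r+1-m$; hence $E_2^{p,n}$ is, up to dualization and this fixed twist, a Koszul cohomology group of $(\omega_X\otimes E^\vee,L)$.

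The final step is the collapse, which is also where I expect the only real work. The sole higher differential connecting the two surviving rows is $d_{n+1}$, so convergence to $0$ will force it to be an isomorphism between the two edge terms, provided no $d_2,\dots,d_n$ enters or leaves them. This is exactly where the hypotheses enter: the differentials feeding $E_2^{p,0}$ originate from terms involving $H^i(E\otimes L^{q-1-i})$ with $1\le i\le n-1$, cleared by $H^i(E\otimes L^{q-1-i})=0$, while those leaving the relevant top-row term land in terms involving $H^i(E\otimes L^{q-i})$ with $1\le i\le n-1$, cleared by $H^i(E\otimes L^{q-i})=0$ — one hypothesis per row. Granting this, $d_{n+1}$ is an isomorphism, and the main obstacle becomes the index bookkeeping: the exterior reflection sends the wedge exponent to $r+1$ minus itself, Serre duality contributes the cohomological shift $n$ together with the inversion of the $L$-twist, and the $n+1$ page accounts for the homological shift, so that these combine to the bidegree $(r-n-p,\,n+1-q)$ rather than a reflected variant. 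Dualizing then yields $K_{p,q}(X,E,L)^\vee\cong K_{r-n-p,\,n+1-q}(X,\omega_X\otimes E^\vee,L)$; conceptually it is precisely the jump of $n+1$ pages that transports the top row back to the bottom, trading the Koszul complex of $(E,L)$ for that of $(\omega_X\otimes E^\vee,L)$ even though Serre duality by itself only relates $H^0$ to $H^n$.
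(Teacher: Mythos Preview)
The paper does not give its own proof of this statement; it is quoted verbatim as Green's duality theorem and simply cited from \cite[Theorem 2.c.6]{green}. So there is nothing in the paper to compare your argument against.

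That said, your argument is correct and is essentially Green's original proof: run the hypercohomology spectral sequence of the exact sheafified Koszul complex (exactness coming from global generation of $L$), identify the $E_2$ of the $j=0$ row with $K_{p,q}(X,E,L)$ and the $E_2$ of the $j=n$ row, via Serre duality together with the pairing $\wedge^{a}W\otimes\wedge^{r+1-a}W\to\wedge^{r+1}W$, with the dual of $K_{r-n-p,\,n+1-q}(X,\omega_X\otimes E^{\vee},L)$, and then use the vanishing hypotheses to kill the intermediate differentials so that $d_{n+1}$ becomes an isomorphism. Your bookkeeping for which cohomology groups obstruct which differentials is right: the incoming $d_r$ to the bottom-row term involves $H^{i}(E\otimes L^{q-1-i})$ and the outgoing $d_r$ from the relevant top-row term involves $H^{i}(E\otimes L^{q-i})$, for $1\le i\le n-1$; the remaining differentials (out of the bottom row, into the top row, and all $d_r$ with $r>n+1$) land in negative or above-top cohomological degree and vanish automatically, which is why exactly the two displayed hypotheses suffice.
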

	
	In the case of abelian surfaces, the canonical bundle is trivial, so that we get the following lemma.
	
	\begin{lemma}\label{stupidlemma}
		Let $A\subseteq \mathbb{P}(H^0(L))$ be an abelian surface embedded by a complete linear system. Then if the embedding is projectively normal, the homogeneous ideal of $A$ is generated by quadrics and cubics.
	\end{lemma}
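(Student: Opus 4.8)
The plan is to reformulate the statement as a Koszul-cohomology vanishing and then extract it from Green's duality. Since the embedding is assumed projectively normal, Remark \ref{remark} tells me that the ideal $I_A$ is generated by quadrics and cubics exactly when $K_{1,q}(A,L)=0$ for every $q\geq 3$. So the whole task reduces to proving these vanishings, and I would get them from Theorem \ref{greendualitytheorem} together with the triviality of the canonical bundle.

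First I would set up the duality with $X=A$, $E=\mathcal{O}_A$, $\dim X=2$ and $r=h^0(A,L)-1$. For a fixed $q\geq 3$ the hypothesis of Theorem \ref{greendualitytheorem} asks only (the range $i=1,\dots,\dim X-1$ collapses to $i=1$) that $H^1(A,L^{q-1})=H^1(A,L^{q-2})=0$. Both $L^{q-1}$ and $L^{q-2}$ are positive powers of the very ample, hence ample, bundle $L$, so Kodaira/Mumford vanishing on the abelian surface gives $H^i(A,L^k)=0$ for $i>0$ and $k\geq 1$, and the hypothesis holds. Using $\omega_A\cong\mathcal{O}_A$, the duality isomorphism then reads
\[
K_{1,q}(A,L)^{\vee}\cong K_{r-3,\,3-q}(A,L).
\]

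It remains to check that the right-hand side vanishes for all $q\geq 3$. For $q\geq 4$ the second index $3-q$ is negative, so the group is zero by Remark \ref{remark}. The only delicate value is $q=3$, where I am left with $K_{r-3,0}(A,L)$; by the same remark this vanishes precisely when $r-3>0$, so the one genuine point to verify — and the step I expect to be the main (if mild) obstacle — is that $r\geq 4$, i.e. $h^0(A,L)\geq 5$. This is forced by the geometry: an abelian surface embeds neither in $\mathbb{P}^2$ nor in $\mathbb{P}^3$ (a smooth surface of degree $d$ in $\mathbb{P}^3$ has $\omega\cong\mathcal{O}(d-4)$, which is trivial only for the quartic K3, whose irregularity is $0$, hence never abelian), so any complete embedding has $h^0(A,L)\geq 5$ and thus $r-3\geq 1>0$. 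Therefore $K_{1,q}(A,L)=0$ for every $q\geq 3$, which by Remark \ref{remark} is exactly the assertion that the homogeneous ideal of $A$ is generated by quadrics and cubics.
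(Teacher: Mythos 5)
Your proof is correct and follows essentially the same route as the paper: reduce via Remark \ref{remark} to the vanishing of $K_{1,q}(A,L)$ for $q\geq 3$, apply Theorem \ref{greendualitytheorem} with $\omega_A\cong\mathcal{O}_A$ to get $K_{1,q}(A,L)^{\vee}\cong K_{r-3,3-q}(A,L)$, and conclude by Remark \ref{remark} again. You are in fact slightly more careful than the paper, which silently uses both the $H^1$-vanishing hypothesis of the duality theorem and the inequality $r-3>0$ needed for the $q=3$ case; your verification that $h^0(A,L)\geq 5$ is the right way to close that small gap.
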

	\begin{proof}
		By Remark \ref{remark}, we just need to prove that $K_{1,q}(A,L)=0$ for all $q\geq 3$. Set $r=h^0(A,L)-1$, then  Theorem \ref{greendualitytheorem} gives an isomorphism
		\[ K_{1,q}(A,L)^{\vee} \cong K_{r-3,3-q}(A,L) \]
		and the group on the right is zero because of Remark \ref{remark}.
	\end{proof}

	\section{Projective normality of polarized abelian surfaces}
	
	The key result that we are going to use is Theorem \ref{projectivenormality}: this was proven by Lazarsfeld {\cite{lazarsfeld}} in the cases $d=7,9,11$ and $d\geq 13$. The remaining cases $d=8,10,12$ were solved by Fuentes Garc\'ia  {\cite{fuentesgarcia}}. 
	
	We would like to sketch here the proof, in particular because the original preprint {\cite{lazarsfeld}} is quite hard to find. We would like to emphasize that all the results and the ideas of this section are due to Lazarsfeld and Fuentes Garc\'ia. Our only contribution is in the presentation of the argument. 
	
	First observe the following:
	\begin{lemma}
		Let $A$ be an abelian surface and $L$ a very ample line bundle on it. Then the embedding $A\subseteq \mathbb{P}(H^0(A,L))$ is projectively normal if and only if the multiplication map
		\[  \text{Sym}^2 H^0(A,L) \longrightarrow H^0(A,L^2) \]
		is surjective.
	\end{lemma}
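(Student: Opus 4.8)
The plan is to rephrase projective normality as a statement about the section ring $R = \bigoplus_{q \geq 0} H^0(A, L^q)$. By Remark \ref{remark} the embedding is projectively normal precisely when $R$ is generated in degree one over $\mathbb{C}$, and a straightforward induction shows that this is equivalent to the surjectivity of every consecutive multiplication map
\[
H^0(L) \otimes H^0(L^{q-1}) \longrightarrow H^0(L^q), \qquad q \geq 2.
\]
Since this map is symmetric in the two copies of $H^0(L)$ when $q=2$, it has the same image as $\mathrm{Sym}^2 H^0(L) \to H^0(L^2)$, so the case $q=2$ is exactly the surjectivity asserted in the statement. The forward implication is therefore immediate, and the entire content lies in showing that surjectivity in degree two forces surjectivity in every higher degree.

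For the converse I would first record the vanishing $H^i(A, L^q) = 0$ for all $i>0$ and $q \geq 1$, which follows from Kodaira vanishing because $L$ is ample and $\omega_A$ is trivial. With this in hand I would separate the maps with $q \geq 3$ from the single map with $q = 2$. For $q \geq 3$ one writes the target bundle as $L^{q-1} = L \otimes L^{q-2}$, a tensor product of at least two base-point-free ample line bundles, and invokes the classical theorem of Mumford and Koizumi on multiplication maps of abelian varieties, which guarantees that $H^0(L) \otimes H^0(L^{q-1}) \to H^0(L^q)$ is then automatically surjective. Together with the hypothesis, which disposes of $q=2$, this gives surjectivity for all $q \geq 2$ and hence projective normality.

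The main obstacle, and the reason the hypothesis is genuinely needed, is precisely the degree-two map, which is the only one whose target fails to decompose as a product of two base-point-free ample factors visible to the multiplication theorem. One sees this cohomologically through the kernel bundle $M_L$ defined by
\[
0 \longrightarrow M_L \longrightarrow H^0(L) \otimes \mathcal{O}_A \longrightarrow L \longrightarrow 0:
\]
using the vanishing above, surjectivity in degree $q$ is equivalent to $H^1(A, M_L \otimes L^{q-1}) = 0$, so the degree-two statement is just $H^1(A, M_L \otimes L) = 0$. A naive Castelnuovo--Mumford bootstrap cannot propagate this to higher $q$, since $H^2(A, M_L) \cong H^0(A, L) \neq 0$ shows $M_L$ is not $2$-regular; this is exactly why one must fall back on the abelian-variety multiplication theorem (equivalently, on the hypothesis) rather than on a pure regularity argument. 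I therefore expect the cleanest write-up to cite the Mumford--Koizumi multiplication theorem for the maps with $q \geq 3$ and to isolate the degree-two surjectivity as the single nontrivial input.
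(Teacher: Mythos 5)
Your reduction of projective normality to the surjectivity of the consecutive multiplication maps $H^0(L)\otimes H^0(L^{q-1})\to H^0(L^q)$ for $q\geq 2$, and the observation that the $q=2$ map has the same image as $\mathrm{Sym}^2H^0(L)\to H^0(L^2)$, are both correct, and your overall architecture (the hypothesis disposes of $q=2$, while the maps with $q\geq 3$ are surjective unconditionally) is in fact the right one. The gap is in the justification of the $q\geq 3$ step. The classical Mumford--Koizumi multiplication theorem gives the surjectivity of $H^0(L^m)\otimes H^0(L^n)\to H^0(L^{m+n})$ only for $m\geq 2$ and $n\geq 3$ (up to symmetry); it does not apply when one of the two factors is $H^0(L)$ with $L$ primitive. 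What Mumford's projection trick gives unconditionally for base-point-free ample $L$ is that the images of $H^0(L\otimes\alpha)\otimes H^0(L^{q-1}\otimes\alpha^{-1})$, summed over all $\alpha\in\mathrm{Pic}^0(A)$, fill up $H^0(L^q)$; isolating the single summand $\alpha=0$ requires the theta-group/translation argument, and it is precisely the failure of that argument when one exponent equals $1$ that makes projective normality of primitive polarizations a theorem of Lazarsfeld and Fuentes Garc\'ia rather than a corollary of Koizumi. Writing $L^{q-1}=L\otimes L^{q-2}$ as a product of base-point-free ample bundles does not put you back in the range of the classical statement: for $q=3$ you would need $H^0(L)\otimes H^0(L^2)\to H^0(L^3)$, i.e.\ $(m,n)=(1,2)$, which violates both inequalities. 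So the step you call ``automatic'' is exactly the one that still requires proof.

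The statement you need is nevertheless true for abelian surfaces, and the paper obtains it by a different mechanism that you should adopt: Green's duality theorem (Theorem \ref{greendualitytheorem}). Since $\omega_A\cong\mathcal{O}_A$, $\dim A=2$, and $H^1(A,L^{q-1})=H^1(A,L^{q-2})=0$ for $q\geq 3$, one gets
\begin{equation*}
K_{0,q}(A,L)^{\vee}\cong K_{r-2,\,3-q}(A,L),\qquad r=h^0(L)-1,
\end{equation*}
and the right-hand side vanishes for degree reasons because $3-q\leq 0$ and $r-2>0$ (Remark \ref{remark}). Since $K_{0,q}(A,L)$ is exactly the cokernel of $H^0(L)\otimes H^0(L^{q-1})\to H^0(L^q)$, this settles all $q\geq 3$ with no multiplication theorem at all. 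Note that this argument is genuinely two-dimensional: in higher dimension the hypotheses of the duality theorem fail for small $q$ (e.g.\ $H^2(A,\mathcal{O}_A)\neq 0$ enters when $\dim A=3$ and $q=3$), which is why the paper cites Iyer's separate argument for the general statement. Your kernel-bundle remarks are sound as commentary, but they do not close the gap.
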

	\begin{proof}
		This is proven for an abelian variety of any dimension by Iyer in {\cite[Proposition 2.1]{iyer_surfaces}}. In the case of abelian surfaces, we can give a quick proof via Koszul cohomology as follows: suppose that the above multiplication map is surjective, then by definition $K_{0,2}(A,L)=0$ and by Remark \ref{remark} we just need to prove that $K_{0,q}(A,L)=0$ for all $q\geq 3$. In this case we see that $H^1(A,L^{q-1}) = H^1(A,L^{q-2})=0$
		so that Theorem \ref{greendualitytheorem} gives
		\[ K_{0,2}(A,L)^{\vee} \cong K_{r-2,3-q}(A,L) \]
		where $r=h^0(L)-1$. Now, it is enough to observe that $r-2>0$ (since $L$ is very ample) and $3-q\leq 0$ by hypothesis, so Remark \ref{remark} gives $K_{r-2,3-q}(A,L)=0$.
	\end{proof}
	
	Now, let us fix an embedded polarized abelian surface $A \subseteq \mathbb{P}(H^0(A,L))$ with $L$ of type $(1,d)$ and $d\geq 7$. There is an exact sequence
	\begin{equation}\label{multiplicationmap} 0 \longrightarrow I \longrightarrow \text{Sym}^2 H^0(A,L) \longrightarrow H^0(A,L^2) \longrightarrow U \longrightarrow 0 \end{equation}
	and we want to prove $U=0$.  Lazarsfeld {\cite{lazarsfeld}} makes the following observation:
	
	\begin{lemma}\label{boundonu}
		With the above notations, we have $\dim U \leq 6$.
	\end{lemma}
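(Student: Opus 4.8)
The plan is to show that $U$ is governed by a single cohomology group on $A$ and then to pin down its size by restricting to a curve in $|L|$. First I would rewrite $U$ in the language of the preceding sections. By definition $U=\operatorname{coker}\big(\operatorname{Sym}^2 H^0(L)\to H^0(L^2)\big)=K_{0,2}(A,L)$, and since the multiplication map kills $\wedge^2 H^0(L)$ this cokernel coincides with the cokernel of the full map $H^0(L)\otimes H^0(L)\to H^0(L^2)$. Introducing the kernel bundle $M_L$, defined by $0\to M_L\to H^0(L)\otimes\mathcal{O}_A\to L\to 0$, and twisting by $L$, the vanishing $H^1(A,L)=0$ identifies $U\cong H^1(A,M_L\otimes L)$. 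Thus the problem becomes the estimate $h^1(A,M_L\otimes L)\leq 6$.

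At this point I would explain why the naive vanishing fails, since this pinpoints where the bound comes from. One would like to apply Theorem \ref{greendualitytheorem} to $K_{0,2}(A,L)$, exactly as was done to force $K_{0,q}(A,L)=0$ for $q\geq 3$; but for $q=2$ the hypothesis requires $H^1(A,L)=H^1(A,\mathcal{O}_A)=0$, and on an abelian surface $H^1(A,\mathcal{O}_A)\cong\mathbb{C}^2$. It is precisely this two-dimensional space -- the space of invariant $1$-forms, equivalently $\operatorname{coker}\big(H^0(A,L)\to H^0(C,\omega_C)\big)$ for a smooth $C\in|L|$ -- that can force $U\neq 0$, and tracking its contribution is what must replace the clean vanishing.

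To make the contribution quantitative I would restrict to a smooth curve $C\in|L|$. Adjunction gives $\omega_C=L|_C$, of genus $g=d+1$, and very ampleness of $L$ makes $\omega_C$ very ample, so $C$ is non-hyperelliptic and Max Noether's theorem gives $\operatorname{Sym}^2 H^0(\omega_C)\twoheadrightarrow H^0(\omega_C^2)$. From $0\to L\to L^2\to\omega_C^2\to 0$ and $H^1(L)=0$ the restriction $H^0(L^2)\twoheadrightarrow H^0(\omega_C^2)$ has kernel $s_C\cdot H^0(L)$, which already lies in the image of the multiplication map; a short diagram chase then yields $\dim U=\dim\operatorname{coker}\big(\operatorname{Sym}^2 W\to H^0(\omega_C^2)\big)$, where $W=\operatorname{im}\big(H^0(L)\to H^0(\omega_C)\big)$ has codimension $2$. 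Writing $H^0(\omega_C)=W\oplus T$ with $T\cong\mathbb{C}^2$ the invariant $1$-forms, surjectivity of the full multiplication map shows this cokernel is a quotient of $(W\otimes T)\oplus\operatorname{Sym}^2 T$.

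The hard part is bounding the contribution of $W\otimes T$: a priori it has dimension $2(d-1)$, and indeed the crude multiplication-by-$1$-forms map $H^0(\omega_C)\otimes T\to H^0(\omega_C^2)$ has cokernel of dimension $g-2=d-1$, so no soft cohomological estimate suffices. The point -- and the main obstacle -- is to show that the products $w\cdot\eta$ with $w\in W$ and $\eta\in T$ already lie in the image of $\operatorname{Sym}^2 W$ modulo a fixed small space. I expect this to follow from the classical derivative (heat-equation) relations among theta functions, equivalently from the triviality of $\Omega^1_A$: the Wronskian-type expressions $s_i\,\partial_k s_j-s_j\,\partial_k s_i$ are genuine sections of $L^2$ and rewrite the $1$-form products in terms of honest quadrics. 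Carrying this out should leave only the symmetric directions $\operatorname{Sym}^2 T$ together with their counterparts, a space of dimension $h^{2,0}+h^{1,1}+h^{0,2}=\dim H^2(A,\mathbb{C})=6$, which I anticipate is the source of the constant.
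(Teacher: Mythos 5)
Your reduction is correct as far as it goes: $U=K_{0,2}(A,L)\cong H^1(A,M_L\otimes L)$, and restricting to a smooth $C\in|L|$ (a non-hyperelliptic canonical curve of genus $d+1$ with $\omega_C=L|_C$) together with Max Noether's theorem does identify $\dim U$ with the dimension of the cokernel of $\operatorname{Sym}^2 W\to H^0(\omega_C^{2})$, where $W\subseteq H^0(\omega_C)$ has codimension $2$. But at exactly that point the proof stops. All that Max Noether gives you is that this cokernel is a quotient of the image of $(W\otimes T)\oplus\operatorname{Sym}^2 T$, i.e.\ a bound of $2(d-1)+3$, which grows with $d$. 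The entire content of the lemma is the claim that the products $w\cdot\eta$ with $w\in W$, $\eta\in T$ lie in $\operatorname{im}(\operatorname{Sym}^2 W)$ modulo a space of bounded dimension, and for this you offer only ``I expect'', ``should'' and ``I anticipate'', plus a gesture at Wronskian/heat-equation identities among theta functions that is never turned into an inequality. The closing numerology ($\operatorname{Sym}^2 T$ ``together with their counterparts'' having dimension $h^{2,0}+h^{1,1}+h^{0,2}=6$) is not a computation: $\dim\operatorname{Sym}^2 T=3$, ``counterparts'' is undefined, and no map relating $H^2(A,\mathbb{C})$ to your cokernel is constructed. So the proposal is a plausible plan whose decisive step is missing.

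For comparison, the paper bounds the other end of the four-term sequence: it shows $\dim I=h^0(\mathbb{P}^{d-1},\mathscr{I}_{A}(2))\leq \frac{d(d-7)}{2}+6$ by a Castelnuovo-type argument --- cut $A\subseteq\mathbb{P}^{d-1}$ with a general codimension-$2$ linear space $H$, check that no quadric containing the nondegenerate surface $A$ can contain $H$ (so restriction of quadrics through $A$ to $H$ is injective), and use that the $2d$ points of $A\cap H$, being in linearly general position, impose many independent conditions on quadrics of $H$. The bound $\dim U\leq 6$ then falls out of the dimension count in $0\to I\to\operatorname{Sym}^2H^0(L)\to H^0(L^2)\to U\to 0$. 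If you want to salvage your route, the missing step is precisely an effective statement about multiplication by the two extra holomorphic differentials on $C$, and I do not see that this is any easier than the lemma itself.
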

	\begin{proof}
		The argument given here is different from the one in {\cite{lazarsfeld}} and it is a slight clarification of the one given in {\cite{fuentesgarcia_arxiv}}. It is based on the following lemma. 
		
		\begin{lemma}\label{boundonquadrics}
			Let $X\subseteq \mathbb{P}^N$ be a nondegenerate integral surface of degree $t$. Then
			\[ \dim H^0(\mathbb{P}^N,\mathscr{I}_{X/\mathbb{P}^N}(2)) \leq \frac{N(N-1)}{2} - \min\{t,2N-5\} \]
		\end{lemma}
		\begin{proof}
			First recall that $t\geq N-1$ (see {\cite[Proposition 0]{eisenbud_harris}}). Now choose $H\subseteq \mathbb{P}^N$ to be a general linear subspace of codimension 2. Then $H\cap X$ consists of $t$ distinct points in linearly general position in $H$: in particular, they span $H$, since $t\geq \dim H +1$. 
			
			Now we observe that there is no quadric $Q\subseteq \mathbb{P}^N$ containing both $X$ and $H$. Indeed, suppose that there is such a $Q$: then, since $X$ is nondegenerate it would have rank at least $3$ so that its singular locus $\text{Sing}(Q)$ is a linear subspace of codimension at least 3, which cannot contain  $H\cap X$. This shows that $X\cap H\cap (Q\setminus \text{Sing}(Q)) \ne \emptyset$ and since $H\cap (Q\setminus \text{Sing}(Q))$ is a Cartier divisor on $Q\setminus \text{Sing}(Q)$ it follows from Krull's principal ideal theorem that every irreducible component of $ X\cap H \cap (Q\setminus \text{Sing}(Q))$ has positive dimension, which gives a contradiction.
			
			This shows that the restriction map
			\[ H^0(\mathbb{P}^N,\mathscr{I}_{X/\mathbb{P}^N}(2)) \longrightarrow H^0(H,\mathscr{I}_{X\cap H /H}(2)) \]
			is injective. To conclude, we can just apply Castelnuovo's argument for which $t\geq N-1$ points in linearly general position in $\mathbb{P}^{N-2}$ impose at least $\min \{t,2N-5\}$ independent conditions on quadrics (see {\cite[Lemma p.115]{acgh}}).  
		\end{proof}
		
		Now it is immediate to prove Lemma \ref{boundonu}: we have $A\subseteq \mathbb{P}(H^0(A,L))\cong \mathbb{P}^{d-1}$ of degree $2d$, so that applying Lemma \ref{boundonquadrics} we obtain
		\[ \dim U = \dim H^0(A,L^2) - \dim \text{Sym}^2H^0(A,L) + \dim I \leq 4d - \binom{d+1}{2} + \frac{d(d-7)}{2} + 6 = 6   \]
	\end{proof}
	
	Then it is enough to show that if $U\ne 0$, then $\dim U \geq 7$. Lazarsfeld's idea is to use the Heisenberg group associated to $(A,L)$.
	
	\subsection{The Heisenberg group}
	
	Recall that to $(A,L)$ we can associate the two groups $K(L)=\{x\in X | t_x^*L \cong L\}$ and $G(L)=\{(\alpha,x)|\alpha\colon L \to t_x^*L \text{ isomorphism }\}$. The group $G(L)$ is called the \textit{Heisenberg group} of $L$ and it is a central extension of $K(L)$ by $\mathbb{C}^*$ {\cite[Theorem 1]{mumford_equations}}:
	\[ 1 \longrightarrow \mathbb{C}^* \longrightarrow G(L) \longrightarrow K(L) \longrightarrow 0 \]
	A linear representation of $G(L)$ where $\mathbb{C}^*$ acts by the character $\lambda \mapsto \lambda^k$ is called a\textit{ representation of weight $k$}. The space $H^0(A,L)$ has a natural linear action of $G(L)$ given by $(\alpha,x)\cdot \sigma = t_{-x}^{*}(\alpha(\sigma))$ and, up to isomorphism, this is the unique irreducible representation of $G(L)$ of weight 1 (see {\cite[Proposition 3, Theorem 2]{mumford_equations}}). This representation induces other representations of weight $2$ on $\text{Sym}^2 H^0(A,L)$ and $H^0(A,L^2)$ such that the multiplication map (\ref{multiplicationmap}) is $G(L)$-equivariant. In particular, $I$ and $U$ can be regarded as $G(L)$-representations of weight $2$. The irreducible ones have been classified by Iyer {\cite[Proposition 3.2]{iyer_surfaces}}.
	
	\begin{prop}[Iyer]\label{irredrepr}
		Let $(A,L)$ be a polarized abelian surface of type $(1,d)$. Then 
		\begin{enumerate}
			\item if $d$ is odd, there is, up to isomorphism, a unique irreducible $G(L)$-representation of weight $2$. This representation has dimension $d$.
			\item if $d=2m$ is even, then there are, up to isomorphism, four distinct $G(L)$-representations of weight $2$. Each irreducible representation has dimension $m$.	
		\end{enumerate}	
	\end{prop}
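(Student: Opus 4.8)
The plan is to reduce the statement to the classical representation theory of theta (Heisenberg) groups. Recall that for a polarized abelian surface of type $(1,d)$ one has $K(L)\cong (\mathbb{Z}/d\mathbb{Z})^2$, and the commutator in $G(L)$ induces a non-degenerate alternating pairing $e^L\colon K(L)\times K(L)\to \mathbb{C}^*$, which in suitable coordinates reads $e^L\big((a_1,b_1),(a_2,b_2)\big)=\zeta^{a_1b_2-a_2b_1}$ with $\zeta=e^{2\pi i/d}$. If $\rho$ is a representation of weight $k$, then for $g,h\in G(L)$ lying over $\bar g,\bar h\in K(L)$ the central commutator $ghg^{-1}h^{-1}=e^L(\bar g,\bar h)$ acts by $e^L(\bar g,\bar h)^k$, so that the datum governing such representations is the \emph{twisted pairing} $e_k\df (e^L)^k$. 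I would invoke the Stone--von Neumann theory for the finite Heisenberg group throughout.

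The first concrete step is to compute the radical $R_k=\{x\in K(L): e_k(x,y)=1 \text{ for all } y\in K(L)\}$. A direct calculation shows that $(a,b)\in R_k$ precisely when $ka\equiv kb\equiv 0 \pmod d$, so that, writing $g=\gcd(k,d)$, one gets $R_k\cong (\mathbb{Z}/g\mathbb{Z})^2$, a group of order $g^2$. Specializing to $k=2$: when $d$ is odd we have $g=1$ and $R_2=0$, so $e_2$ remains non-degenerate on all of $K(L)$; when $d=2m$ is even we have $g=2$ and $R_2\cong(\mathbb{Z}/2\mathbb{Z})^2$ has order $4$, while $e_2$ descends to a non-degenerate pairing on the quotient $K(L)/R_2$, a group of order $d^2/4=m^2$.

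It then remains to feed this into the Stone--von Neumann mechanism. On any weight-$2$ representation the preimage $\widetilde{R}_2$ of $R_2$ in $G(L)$ acts through characters: its commutators now take values in $\mu_2=\{\pm 1\}$ and are killed by the weight-$2$ condition $\lambda\mapsto\lambda^2$. The representation therefore decomposes according to the characters of $\widetilde{R}_2$ restricting to $\lambda\mapsto\lambda^2$ on the central $\mathbb{C}^*$, of which there are exactly $|R_2|$; and on each isotypic piece $G(L)$ acts through the non-degenerate Heisenberg group attached to $K(L)/R_2$. The uniqueness half of Stone--von Neumann then yields, for each such character, a single irreducible of dimension $\sqrt{|K(L)/R_2|}$. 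For $d$ odd this gives one representation of dimension $\sqrt{d^2}=d$, and for $d=2m$ it gives $|R_2|=4$ representations, each of dimension $\sqrt{m^2}=m$, as claimed.

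The $\gcd$ computation of $R_2$ is routine; the step I expect to require the most care is the justification of the reduction to the non-degenerate quotient, namely verifying that $\widetilde{R}_2$ acts by scalars in weight $2$ and that the characters of $R_2$ index the distinct irreducibles faithfully, so that one obtains exactly $|R_2|$ of them and not fewer. This is precisely the point where the arithmetic of $\gcd(2,d)$ separates the odd and even cases and changes the count from one irreducible of dimension $d$ to four of dimension $m$.
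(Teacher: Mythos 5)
The paper does not actually prove this proposition: it is stated as a quoted result of Iyer, with a pointer to \cite[Proposition 3.2]{iyer_surfaces}, so there is no internal argument to compare against. Your reconstruction via the finite Stone--von Neumann theorem is the standard proof of such classification statements, and it is correct. The radical computation $R_k\cong(\mathbb{Z}/\gcd(k,d)\mathbb{Z})^2$ is right, and the step you flag as delicate does go through, for a reason worth recording explicitly: since $R_2$ is by definition the radical of $e_2=(e^L)^2$, conjugation by any $g\in G(L)$ sends a weight-$2$ character $\chi$ of $\widetilde{R}_2$ to $\chi\cdot e_2(\,\cdot\,,\bar g)=\chi$, so each $\chi$-isotypic subspace of an irreducible weight-$2$ representation is $G(L)$-stable; hence an irreducible supports a single such $\chi$, $\ker\chi$ is normal in $G(L)$, and the quotient is a nondegenerate Heisenberg group over $K(L)/R_2$, to which Stone--von Neumann applies. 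Moreover the commutators of $\widetilde{R}_2$ land in $\mu_2$, which is killed by $\lambda\mapsto\lambda^2$, so the weight-$2$ characters of $\widetilde{R}_2$ form a nonempty torsor under $\operatorname{Hom}(R_2,\mathbb{C}^*)$ (nonempty because $\mathbb{C}^*$ is divisible), giving exactly $|R_2|$ of them, each realized by a Schr\"odinger model and pairwise yielding non-isomorphic irreducibles. This produces one irreducible of dimension $d$ for $d$ odd and four of dimension $m$ for $d=2m$, as claimed. The only gap in your write-up is that this Clifford-theoretic justification is left implicit rather than carried out, but you correctly identify it as the crux and it closes as above.
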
	
	
	This proves Theorem \ref{projectivenormality} for most cases:
	
	\begin{proof}[Proof of Theorem \ref{projectivenormality}]
		Suppose that $d$ is odd and greater than $7$ or even and greater than $14$. Assume by contradiction that in \ref{multiplicationmap} we have $U\ne 0$: then, since $U$ is a $G(L)$-representation it must be by Proposition \ref{irredrepr} that $\dim U \geq 7$. However, this is impossible because $\dim U \leq 6$ by Lemma \ref{boundonu}.	
		
		This leaves the cases $d=8,10,12$, which were solved by Fuentes Garc\'ia in {\cite{fuentesgarcia}}  using the involutions in $G(L)$ coming from the $2$-torsion points of $K(L)$, together with geometric results about polarized abelian surfaces of small degree.
	\end{proof}
	
	\begin{rmk}\label{projectivenormalitykuronyalozovanu}
		Theorem \ref{projectivenormality} can be proven for $d\geq 10$ also using the results of K\"uronya and Lozovanu: indeed in this case $(L)^2 \geq 20$, so that by {\cite[Theorem 1.1]{kuronya_lozovanu}} the embedding is not projectively normal if and only if there exists an elliptic curve $E\subseteq A$ such that $(E)^2=0$ and $(E\cdot L)=1,2$. In particular, $L_{|E}$ is not very ample so that $L$ cannot be very ample.
	\end{rmk}	
	
	\section{Homogeneous ideals of polarized abelian surfaces}
	Now it is very easy to prove Proposition \ref{proposition}:
	
	\begin{proof}[Proof of Proposition \ref{proposition}]  Follows immediately from Theorem \ref{projectivenormality} and Lemma \ref{stupidlemma}.
	\end{proof}
	
	Theorem \ref{theoremideal} instead is a consequence of the following:
	
	\begin{thm}[Koizumi {\cite{koizumi}}, Lazarsfeld {\cite{lazarsfeld}}, Fuentes Garc\'ia {\cite{fuentesgarcia}}, Ohbuchi {\cite{ohbuchi}}]\label{classificationprojectivenormality}
		Let $A\subseteq \mathbb{P}(H^0(L))$ be an abelian surface embedded by a complete linear system. Then the embedding is projectively normal, unless $L$ is of type $(1,5),(1,6),(2,4)$. In these cases, the embedding is never projectively normal.
	\end{thm}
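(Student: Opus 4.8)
The plan is to argue according to the polarization type $(d_1,d_2)$ (with $d_1\mid d_2$), splitting into the three regimes $d_1\ge 3$, $d_1=2$, and $d_1=1$, and feeding each regime a different one of the cited inputs. First I would delimit the types that actually occur: since we assume $L$ is very ample, the types $(1,1),(1,2),(1,3),(1,4)$ and $(2,2)$ are excluded, the first four because $h^0(L)=d_1d_2\le 4$ would force a map into $\mathbb{P}^{\le 3}$ (and no abelian surface embeds in $\mathbb{P}^3$, by adjunction), and $(2,2)$ for the same reason. Thus the cases to settle are $d_1=1$ with $d_2\ge 5$, $d_1=2$ with $d_2\ge 4$, and all $d_1\ge 3$.

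The structural observation that drives the $d_1\ge 2$ cases is that the Néron–Severi class of $L$ is divisible by $d_1$ (its type being $d_1\cdot(1,d_2/d_1)$), and since $\mathrm{Pic}^0(A)=\hat A$ is a divisible group, the $\mathrm{Pic}^0$-ambiguity can be absorbed so as to write $L\cong M^{\otimes d_1}$ for an honest polarization $M$ of type $(1,d_2/d_1)$. When $d_1\ge 3$ this exhibits $L$ as an $n$-th power with $n\ge 3$, hence very ample by Lefschetz and projectively normal by Koizumi's theorem \cite{koizumi}, with no further analysis needed. When $d_1=2$ we have $L\cong M^{\otimes 2}$ with $M$ of type $(1,d_2/2)$, and here I would invoke Ohbuchi's sharp criterion \cite{ohbuchi}: a second power $M^{\otimes 2}$ is projectively normal precisely when $|M|$ is base-point-free. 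For $d_2\ge 6$, so $d_2/2\ge 3$, the very ampleness of $L$ forces base-point-freeness of $M$, because an elliptic curve $E$ with $(M\cdot E)=1$ would give $(L\cdot E)=2$ and a non–very-ample restriction $L|_E$; since base-point-freeness of a type $(1,e)$ bundle with $e\ge 3$ is obstructed only by such curves, $|M|$ is base-point-free and $L$ is projectively normal. The borderline type $(2,4)$, where $M$ has type $(1,2)$, is genuinely different: such an $M$ carries base points on every abelian surface (its $h^0=2$ is too small), so Ohbuchi's criterion gives failure of projective normality even though $L$ is very ample, which is exactly the assertion that $(2,4)$ is never projectively normal.

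It remains to treat $d_1=1$, i.e. type $(1,d)$. For $d\ge 7$ this is precisely Theorem \ref{projectivenormality}, the deep input of Lazarsfeld and Fuentes García that we are assuming. For the two remaining very ample cases $d=5,6$ I would use a bare dimension count on the multiplication map $\mathrm{Sym}^2 H^0(L)\to H^0(L^2)$: the source has dimension $\binom{d+1}{2}=\tfrac{d(d+1)}{2}$ and the target has dimension $h^0(L^2)=4d$, and since $d+1<8$ the source is strictly smaller than the target, so the map cannot be surjective and the embedding is not projectively normal. Assembling the three regimes then shows that projective normality holds in every very ample case except $(1,5),(1,6),(2,4)$ and fails in each of those three, as claimed.

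The main obstacle is the type $(2,4)$. For $(1,5)$ and $(1,6)$ the failure is forced by numerics and costs nothing, and the entire range $d_1\ge 3$ is immediate from Koizumi, but for $(2,4)$ one has $\dim\mathrm{Sym}^2 H^0(L)=36>32=h^0(L^2)$, so no counting argument can detect the failure and one must use the sharp base-point-free characterization of normal generation for second powers. The secondary point requiring care is the bookkeeping of very ampleness, since it fixes the list of cases that must be addressed and, in the $d_1=2$ regime, is exactly what guarantees the base-point-freeness of $M$; the hardest positive statement, type $(1,d)$ with $d\ge 7$, is supplied wholesale by Theorem \ref{projectivenormality}.
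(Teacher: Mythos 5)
Your proposal is correct and follows essentially the same route as the paper: the same trichotomy on $d_1$, with Koizumi for $d_1\ge 3$, Ohbuchi for $d_1=2$, Theorem \ref{projectivenormality} for type $(1,d)$ with $d\ge 7$, and the dimension count $\tfrac{d(d+1)}{2}<4d$ for $(1,5)$ and $(1,6)$. The only difference is one of detail: where the paper simply cites Ohbuchi (splitting $m\ge 3$ from $m=2$ and offering Barth's count of the six quadrics through a $(2,4)$-surface as an alternative), you make the reduction explicit via the base-point-free criterion for $M^{\otimes 2}$, checking base-point-freeness of $M$ by a Reider-type argument when $d_2\ge 6$ and its failure for type $(1,2)$ via $M^2=4\ne 0$ -- a correct and slightly more self-contained rendering of the same input.
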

	
	\begin{proof}
		Suppose that $L$ is of type $(d_1,d_1 m)$: if $d_1\geq 3$, then the result was first proven  by Koizumi \cite{koizumi}. Another proof can be found in {\cite[Theorem 7.3.1]{birkenhake_lange}}. 
		
		If $d_1=2$ and $m\geq 3$ then projective normality follows from a result by Ohbuchi \cite{ohbuchi}. Alternatively,  we can reason as in Remark \ref{projectivenormalitykuronyalozovanu}.  Ohbuchi also shows in \cite[Lemma 6]{ohbuchi} that if  $m=2$, then  $L$ is not projectively normal. For another proof of this, Barth has shown in \cite[Theorem 2.11]{barth} that the ideal $I_{A}$ contains precisely 6 linearly independent quadrics. Hence $\text{Sym}^2 H^0(A,L) \longrightarrow H^0(A,L^2)$ has image of dimension $36-6=30$, which is less than the dimension of $H^0(A,L^2)$.
		
		If $d_1=1$, then this is Theorem \ref{projectivenormality}. Observe that there cannot be projective normality for $L$ of type $(1,5)$ or $(1,6)$  because in these cases $\text{Sym}^2H^0(A,L)$ has dimension smaller than $H^0(A,L^2)$.
		
		In all the other cases, it is easy to see that the line bundle cannot be very ample.
	\end{proof}
	
	\begin{proof}[Proof of Theorem \ref{theoremideal}]
		This follows immediately from Theorem \ref{classificationprojectivenormality}, Lemma \ref{stupidlemma}.
	\end{proof}
	
	\begin{rmk}
		We can also say something about the exceptional cases: for a very ample line bundle of type $(1,5)$ Manolache has proven {\cite[Theorem 1]{manolache}} that the homogeneous ideal is generated by 3 quintics and 15 sextics. For the case $(1,6)$  Gross and Popescu {\cite[Remark 4.8.(2)]{gross_popescu_cyi}} have proven that the ideal sheaf of a general such abelian surface is generated by cubics and quartics. For the case $(2,4)$ Barth {\cite[Theorem 2.14,Theorem 4.9]{barth}} gives explicit quadrics which generate the ideal sheaf of the surface: it is then easy (for example with Macaulay2 {\cite{M2}}) to compute examples where the homogeneous ideal is generated by quadrics and quartics. \end{rmk}

\begin{bibsection}[References]
\begin{biblist}[\normalsize]

\bib{acgh}{book}{
   author={Arbarello, Enrico},
   author={Cornalba, Maurizio},
   author={Griffiths, Phillip A.},
   author={Harris, Joseph},
   title={Geometry of algebraic curves. Vol. I},
   series={Grundlehren der Mathematischen Wissenschaften [Fundamental
   Principles of Mathematical Sciences]},
   volume={267},
   publisher={Springer-Verlag, New York},
   date={1985},
}

\bib{barth}{article}{
   author={Barth, Wolf},
   title={Abelian surfaces with $(1,2)$-polarization},
   conference={
      title={Algebraic geometry, Sendai, 1985},
   },
   book={
      series={Adv. Stud. Pure Math.},
      volume={10},
      publisher={North-Holland, Amsterdam},
   },
   date={1987},
   pages={41--84},
}

\bib{birkenhake_lange}{book}{
   author={Birkenhake, Christina},
   author={Lange, Herbert},
   title={Complex abelian varieties},
   series={Grundlehren der Mathematischen Wissenschaften [Fundamental
   Principles of Mathematical Sciences]},
   volume={302},
   edition={2},
   publisher={Springer-Verlag, Berlin},
   date={2004},
   pages={xii+635},
}

\bib{eisenbud_harris}{article}{
   author={Eisenbud, David},
   author={Harris, Joe},
   title={On varieties of minimal degree (a centennial account)},
   conference={
      title={Algebraic geometry, Bowdoin, 1985},
   },
   book={
      series={Proc. Sympos. Pure Math.},
      volume={46},
      publisher={Amer. Math. Soc., Providence},
   },
   date={1987},
}

\bib{fuentesgarcia}{article}{
   author={Garc{\'{\i}}a, Luis Fuentes},
   title={Projective normality of abelian surfaces of type $(1,2d)$},
   journal={Manuscripta Math.},
   volume={114},
   date={2004},
   number={3},
   pages={385--390},
}

\bib{fuentesgarcia_arxiv}{article}{
   author={Garc{\'{\i}}a, Luis Fuentes},
   title={Projective normality of abelian surfaces of type $(1,2d)$},
   eprint={http://arxiv.org/abs/math/0306058},
}

\bib{green}{article}{
   author={Green, Mark L.},
   title={Koszul cohomology and the geometry of projective varieties},
   journal={J. Differential Geom.},
   volume={19},
   date={1984},
   number={1},
   pages={125--171},
}

\bib{gross_popescu}{article}{
   author={Gross, Mark},
   author={Popescu, Sorin},
   title={Equations of $(1,d)$-polarized abelian surfaces},
   journal={Math. Ann.},
   volume={310},
   date={1998},
   number={2},
   pages={333--377},
}

\bib{gross_popescu_cyi}{article}{
   author={Gross, Mark},
   author={Popescu, Sorin},
   title={Calabi-Yau threefolds and moduli of abelian surfaces. I},
   journal={Compositio Math.},
   volume={127},
   date={2001},
   number={2},
   pages={169--228},
}

\bib{iyer_surfaces}{article}{
   author={Iyer, Jaya N.},
   title={Projective normality of abelian surfaces given by primitive line
   bundles},
   journal={Manuscripta Math.},
   volume={98},
   date={1999},
   number={2},
   pages={139--153},
}

\bib{koizumi}{article}{
   author={Koizumi, Shoji},
   title={Theta relations and projective normality of Abelian varieties},
   journal={Amer. J. Math.},
   volume={98},
   date={1976},
   number={4},
   pages={865--889},
}

\bib{kuronya_lozovanu}{article}{
   author={Küronya, Alex},
   author={Lozovanu, Victor},
   title={ A Reider-type theorem for higher syzygies on abelian surfaces },
   eprint={http://arxiv.org/abs/1509.08621},
} 

\bib{lazarsfeld}{misc}{
 author={Lazarsfeld, Robert},
 title={Projectivité normale des surfaces abéliennes},
 note={(Redige par O. Debarre),Preprint No. 14, Europroj CIMPA, 1990},
}

\bib{M2}{article}{
  label = {M2}
  author = {Grayson, Daniel R.},
  author = {Stillman, Michael E.},
  title = {Macaulay2, a software system for research in algebraic geometry},
  eprint = {http://www.math.uiuc.edu/Macaulay2/},
 }

\bib{manolache}{article}{
   author={Manolache, Nicolae},
   title={Syzygies of abelian surfaces embedded in ${\bf P}^4({\bf C})$},
   journal={J. Reine Angew. Math.},
   volume={384},
   date={1988},
   pages={180--191},
}

\bib{manolache_schreyer}{article}{
      author={Manolache, Nicolae},
      author={Schreyer, Frank-Olaf },
       title={Moduli of $(1,7)$-polarized abelian surfaces via syzygies},
        date={2001},
     journal={Math. Nachr.},
      volume={226},
       pages={177\ndash 203},
}

\bib{mumford_equations}{article}{
   author={Mumford, David},
   title={On the equations defining abelian varieties. I},
   journal={Invent. Math.},
   volume={1},
   date={1966},
   pages={287--354},
 }

 \bib{ohbuchi}{article}{
   author={Ohbuchi, Akira},
   title={A note on the normal generation of ample line bundle on abelian surface},
   journal={Proc. Amer. Math. Soc.},
   volume={117},
   date={1993},
   pages={275--277},
}

\end{biblist}
\end{bibsection}

\end{document}